\documentclass{article}
\usepackage{amsmath}
\usepackage{amsthm}
\usepackage{amssymb}


\theoremstyle{plain}

\newtheorem{thm}{Theorem}
\newtheorem{cor}[thm]{Corollary}
\newtheorem{lem}[thm]{Lemma}

\newtheorem{claim}[thm]{Claim}
\newtheorem{mainclaim}[thm]{Main Claim}
\newtheorem{mainlem}[thm]{Main Lemma}

\newtheorem*{theorem}{Theorem}

\theoremstyle{definition}

\theoremstyle{remark}

\newcounter{enuroman}
\renewcommand{\theenuroman}{\roman{enuroman}}

\newcounter{enuRoman}
\renewcommand{\theenuRoman}{(\Roman{enuRoman})}

\newcounter{enuAlph}
\renewcommand{\theenuAlph}{\Alph{enuAlph}}

\newcounter{enualph}
\renewcommand{\theenualph}{\alph{enualph}}

\newcounter{enuarabic}
\renewcommand{\theenuarabic}{\arabic{enuarabic}}

\newcommand{\forces}{\Vdash}
\newcommand{\re}{{\upharpoonright}}

\newcommand{\embed}{{<\!\!\circ\;}}

\newcommand{\CC}{{\mathbb C}}

\newcommand{\PP}{{\mathbb P}}

\newcommand{\cc}{{\mathfrak c}}

\newcommand{\PAT}{{\mathsf{PAT}}}

\newcommand{\sft}{{\mathsf{t}}}

\newcommand{\dom}{{\mathrm{dom}}}

\newcommand{\supp}{{\mathrm{supp}}}

\newcommand{\id}{{\mathrm{id}}}

\newcommand{\sub}{\subseteq}
\newcommand{\sem}{\setminus}
\newcommand{\twoom}{2^\omega}
\newcommand{\twolom}{2^{<\omega}}

\newcommand{\Loleriar}{\mbox{$\Longleftrightarrow$}}


\title{Q}

\author{J\"org Brendle\thanks{Partially supported by Grant-in-Aid for Scientific Research
   (C) 15K04977, Japan Society for the Promotion of Science. I also acknowledge partial 
   support from Michael Hru\v s\'ak's grants, CONACyT grant no. 177758  and
   PAPIIT grant IN-108014, during my stay at UNAM in spring 2015, when this research was started.}  \\
   Graduate School of System Informatics \\
   Kobe University \\
   Rokko-dai 1-1, Nada-ku \\
   Kobe 657-8501, Japan \\
   email: {\sf brendle@kurt.scitec.kobe-u.ac.jp}
}

\begin{document}
\maketitle

\begin{abstract}
\noindent A Q-set is an uncountable set of reals all of whose subsets are relative $G_\delta$ sets. 
We prove that, for an arbitrary uncountable cardinal $\kappa$, there is consistently a
Q-set of size $\kappa$ whose square is not Q. This answers a question of A. Miller.
\end{abstract}



\section{Introduction}

A {\em Q-set} is an uncountable set of reals in which every subset is a relative $G_\delta$ set.
If $X$ is a Q-set, then clearly $2^{|X|} = \cc$. In particular, the existence of Q-sets implies
$2^{\omega_1} = \cc$. On the other hand, under Martin's Axiom MA every uncountable
sets of reals of cardinality $< \cc$ is Q~\cite{MS70}. Przymusi\'nski~\cite{Pr80} proved that if there
is a Q-set, then there is a Q-set of size $\aleph_1$ all of whose finite powers are Q as well.
Furthermore, by the above, it is clear that under MA all finite powers of Q-sets are again Q.
This left open the question of whether there can consistently exist a Q-set whose square is not
Q. In~\cite{Fl83}, Fleissner claimed the consistency of a stronger statement, namely, that there
is a Q-set of size $\aleph_2$ while no square of a set of reals of size $\aleph_2$ is Q.
However, Miller~\cite[Theorem 8]{Mi07} observed that Fleissner's argument was flawed.
More explicitly, he showed that if there is a Q-set of size $\aleph_2$, then there is a
set of reals $X = \{ x_\alpha : \alpha < \omega_2 \}$ such that the set of points ``above
the diagonal", $\{ (x_\alpha, x _\beta ) : \alpha < \beta < \omega_2 \}$, is a
relative $G_\delta$ set in the square $X^2$. While this does not contradict the statement
of Fleissner's result (whose correctness remains open as far as we know), it does contradict
his method of proof, for he claimed that in his model, for every set of reals $X = \{ x_\alpha :
\alpha < \omega_2 \}$, the set $\{ (x_\alpha,x_\beta) : \alpha < \beta < \omega_2 \}$ was not
a relative $G_\delta$ in $X^2$.

Here we show:
\begin{theorem}
Let $\kappa$ be an arbitrary uncountable cardinal.
It is consistent that there is a Q-set of size $\kappa$ whose square is not Q.
\end{theorem}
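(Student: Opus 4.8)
The plan is to realize the desired model by a finite support ccc forcing that generically turns a fixed set $X$ into a Q-set while preserving, throughout, the failure of one carefully chosen subset $B \sub X^2$ to be a relative $G_\delta$ in $X^2$. First I would dispose of the cardinal arithmetic: since any Q-set of size $\kappa$ forces $2^\kappa = \cc$, I start over a ground model of GCH, fix a set $X = \{ x_\alpha : \alpha < \kappa \} \sub \twoom$ of distinct reals, and arrange that the final value of the continuum equals $2^\kappa$ via the length of the iteration. I also fix a relation $R \sub \kappa \times \kappa$ and set $B = \{ (x_\alpha, x_\beta) : (\alpha,\beta) \in R \}$. The pair $(X, R)$ will be chosen with a strong ``entanglement'' property, described below, which is what ultimately obstructs productivity; optionally $X$ itself can be added generically to give its points extra mutual genericity.

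The engine of the construction is the standard $G_\delta$-coding forcing $\QQ_A$, which for a given $A \sub X$ generically builds a decreasing sequence $\langle U_n : n < \omega \rangle$ of open subsets of $\twoom$ (approximated by finite clopen pieces, together with finite ``escape'' promises for points of $X \sem A$) so that $A = X \cap \bigcap_n U_n$. Each $\QQ_A$ is $\sigma$-centered, hence Knaster, so a finite support iteration (equivalently product) $\PP$ of such forcings, one factor for every subset handed out by a bookkeeping function running over all nice names for subsets of $X$, is Knaster and in particular ccc. Two properties are then routine: $\PP$ preserves cardinals and forces $\cc = 2^\kappa$; and since every subset of $X$ in the extension is captured by a nice name at some stage and subsequently given a generic $G_\delta$ code whose validity is absolute, $X$ is a Q-set in $V^\PP$. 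Przymusi\'nski's and Miller's results show this much is cheap; the content is entirely in the square.

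The crux is the preservation statement: in $V^\PP$ the set $B$ is \emph{not} a relative $G_\delta$ in $X^2$. By ccc and finite support, any candidate code is a real with countable support, so it suffices to prove, by induction on the iteration, that for every name $\langle \dot V_n : n < \omega \rangle$ for a decreasing sequence of open subsets of $\twoom \times \twoom$ it is forced that $\big(\bigcap_n \dot V_n\big) \cap X^2 \ne B$. I would reformulate the target topologically: $B$ and $X^2 \sem B$ should be \emph{inseparable} by a plane $G_\delta$/$F_\sigma$ pair, i.e.\ topologically entangled, and the goal is to show $\PP$ adds no such separation. The single-step lemma is that $\QQ_A$ preserves entanglement, proved by a fusion argument exploiting $\sigma$-centeredness: given a condition and a name for the open sets, one thins inside a centered piece where only finitely much clopen information is committed in finitely many active factors, and then uses the engineered property of $(X,R)$ to locate a pair $(x_\alpha, x_\beta)$ of the ``wrong'' $B$-status that the committed finite clopen rectangles cannot separate, so the putative code can be forced to misclassify it. Finite support limits pass the property through, since conditions are finite and reduce each verification to a finite subproduct.

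The main obstacle — and where essentially all the work lies — is precisely this preservation, together with the matching choice of $(X,R)$. The tension is structural: the very codes that make every one-dimensional subset of $X$ a relative $G_\delta$ are exactly the raw material an adversary would try to assemble, together with new reals, into a two-dimensional separation of $B$; one must show this assembly is impossible. Combinatorial ``type'' or symmetry arguments alone do not suffice, because with countably many one-dimensional codes an adversary can already separate all $\kappa \le \cc$ points combinatorially, so the inseparability must be genuinely topological and must be built into $(X,R)$ as a robust, forcing-preserved property (a Luzin/independent-style entanglement: for every countable family of clopen-approximable relations there remain pairs of each $B$-status that no finite union of rational rectangles distinguishes). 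Verifying that this property holds for a suitable $(X,R)$ and is preserved by each $\QQ_A$ — uniformly in $\kappa$, so that the argument never appeals to $\kappa > \cc$ — is the heart of the proof.
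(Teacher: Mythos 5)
Your skeleton (finite support ccc iteration of $G_\delta$-coding forcings, with bookkeeping over all names for subsets of $X$; the Q-set half being routine) is the same as the paper's, but the proposal has one outright error and one central gap. The error: under GCH the ground model has $\cc=\aleph_1$, so for $\kappa>\aleph_1$ there simply is no set $X\sub\twoom$ of $\kappa$ distinct reals to "fix"; adding $X$ generically is not, as you say, optional, but mandatory, and in the actual proof its genericity is load-bearing rather than cosmetic. The gap: the entire "square is not Q" half of your argument rests on an "entanglement" property of $(X,R)$ that is never defined, never shown to be satisfiable by any pair $(X,R)$, and never shown to be preserved; you concede yourself that this is "the heart of the proof," which means the proposal identifies the difficulty without resolving it. Note also that the induction cannot run on the literal statement "$B$ is not a relative $G_\delta$ in $X^2$" (that property is not preservable by ccc forcing in general — one coding step can create a separation), so everything hinges on choosing the stronger invariant correctly; and your claim that "finite support limits pass the property through" is unsubstantiated — limit stages of countable cofinality add new reals, hence new candidate codes not captured by any earlier stage, and in preservation arguments for FS iterations the limit step is typically where the real work lies.

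For contrast, the paper needs no preserved invariant and no step-by-step induction at all. It takes $X=\{c_\alpha:\alpha<\kappa\}$ to be Cohen reals added at stage one of the iteration, takes $B$ to be the above-diagonal set $\{(c_\alpha,c_\beta):\alpha<\beta<\kappa\}$, and presents the \emph{whole} iteration (Cohen part plus all coding factors) recursively with hereditarily finitary conditions, so that every condition has a finite "pattern." Given names $\dot S_n$ for the rational rectangles generating $\dot V_n\supseteq B$ and a countable elementary submodel $M$, one fixes $\beta<\alpha$ above $M\cap\omega_1$; for any condition $p$, elementarity produces an isomorphic copy $p'\in M$ of $p$'s pattern in which the coordinate playing the role of $\alpha$ is some $\alpha'<M\cap\omega_1\leq\beta$. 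Amalgamating $p$ and $p'$ and extending, one finds $\tilde p$ forcing the genuinely above-diagonal pair $(\dot c_{\alpha'},\dot c_\beta)$ into some rectangle $[\sigma_0]\times[\tau_0]\sub\dot V_n$, decided by a condition $r\in M$; then a second, hand-built amalgamation $q\leq p,r$ with $\sigma^q_\alpha=\sigma^{\tilde p}_{\alpha'}$ transfers the stems back, so $q$ forces the \emph{below}-diagonal pair $(\dot c_\alpha,\dot c_\beta)$ into $\dot V_n$. Thus the obstruction comes from mutual Cohen genericity plus a symmetry/pattern argument over the forcing — precisely the kind of argument you dismissed as insufficient — and the finitary recursive presentation of the iteration exists exactly to make that reflection possible. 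Your plan would become a proof only after you (i) define the entanglement invariant, (ii) construct $(X,R)$ satisfying it (for $\kappa>\aleph_1$ necessarily generically), and (iii) prove both the successor and the limit preservation steps; none of these appears in the proposal.
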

This answers a question of A. Miller~\cite[Problem 7.16]{Mi15} (see also~\cite[after the proof of
Theorem 8 on p. 32]{Mi07}).

Our model is, in a sense, very natural: we first add $\kappa$ Cohen reals $c_\alpha $,
$\alpha < \kappa $, and then turn the set $C = \{ c_\alpha : \alpha < \kappa \}$ into a Q-set
in a finite support iteration of length $\kappa^+$, going through all subsets of $C$ by a
book-keeping argument, and turning them into relative $G_\delta$'s by ccc forcing
(see, e.g., \cite[Section 5]{Mi95} for this forcing). To have control over the names for conditions arising in the
iteration, we present a recursive definition of this forcing with finitary conditions. Incidentally,
this model is the same as the one used by Fleissner in~\cite{Fl83}, for the case $\kappa =\omega_2$,
though our description is somewhat different. Furthermore, it is similar to the one used by Fleissner and Miller in~\cite{FM80}, 
for the case $\kappa =\omega_1$, so that our technique also shows that the square of their Q-set, which
is concentrated on the rationals, is not a Q-set. The main point is, of course, to prove that
$C^2$ is not a Q-set. To this end, we show that the set of points ``above the diagonal",
$\{ (c_\alpha, c_\beta) : \alpha < \beta < \kappa \}$, is not a relative $G_\delta$ set in $C^2$.
Unlike for Fleissner's work~\cite{Fl83}, there is no contradiction with Miller's result~\cite[Theorem 8]{Mi07},
because we prove a much weaker statement. 

\bigskip

{\bf Acknowledgment.} I thank Michael Hru\v s\'ak for bringing this problem to my attention and for 
suggesting the model. I am also grateful to the referee for pointing out the reference~\cite{FM80}.

\section{Proof of the Theorem}

Assume the ground model $V$ satisfies GCH.
We perform a finite support iteration $( \PP_\gamma : \gamma \leq \kappa^+ )$ of ccc forcing.
As usual, elements of $\PP_\gamma$ are functions with domain $\gamma$.
Also, a book-keeping argument will hand us down a sequence $(\dot A_\gamma : \gamma < \kappa^+)$
of $\PP_\gamma$-names of subsets of $\kappa$ such that for every $\PP_{\kappa^+}$-name
$\dot A$ for a subset of $\kappa$ there are $\gamma < \kappa^+$ such that $\PP_\gamma$
forces that $\dot A = \dot A_\gamma$. Since this is a standard argument we will omit details.
The recursive definition of the $\PP_\gamma$ is as follows:
\begin{itemize}
\item $\PP_0$ is the trivial forcing (as usual).
\item $\PP_1$ is $\CC_{\kappa}$, the forcing for adding $\kappa$ Cohen reals:
   if $p \in \PP_1$, then $p(0) = ( \sigma_\alpha^p : \alpha \in F^p)$ where $F^p \sub \kappa$ is finite and
   $\sigma_\alpha^p \in \twolom$ for $\alpha \in F^p$; the ordering is given by 
   $q \leq p$ if $F^q \supseteq F^p$ and $\sigma_\alpha^q \supseteq \sigma_\alpha^p$ for all $\alpha \in F^p$.
\item If $\gamma$ is a limit ordinal, $\PP_\gamma$ consists of all functions $p$ with $\dom (p) = \gamma$
   such that there is $0<\delta < \gamma$ with $p\re\delta \in \PP_\delta$ and $p (\epsilon) = \emptyset$ for
   all $\epsilon$ with $\delta \leq\epsilon < \gamma$; the ordering is given by $q \leq p$ if
   $q \re \delta \leq p \re \delta$ for all $\delta < \gamma$.
\item If $\gamma \geq 1$, $\PP_{\gamma + 1}$ consists of all functions $p$ with $\dom (p) = \gamma + 1$,
   $p \re \gamma \in \PP_\gamma$ and $p(\gamma)$ is a finite (possibly empty) subset of $(\twolom \cup \kappa) \times \omega$
   such that
   \begin{itemize}
   \item if $(\alpha,n) \in p(\gamma)$ for some $\alpha \in \kappa$ and $n \in \omega$, then $\alpha \in F^p$ and 
      $p\re\gamma \forces \alpha \notin \dot A_\gamma$,
   \item if $(\alpha,n) , (\sigma,n) \in p(\gamma)$ for some $\alpha \in\kappa$, $\sigma\in\twolom$ and $n\in\omega$,
      then $\sigma^p_\alpha$ and $\sigma$ are incompatible in $\twolom$;
   \end{itemize}
   the ordering is given by $q \leq p$ if $q\re\gamma \leq p\re\gamma$ and $q(\gamma) \supseteq p(\gamma)$.
\end{itemize}
For $\gamma \leq \kappa^+$ and $p \in \PP_\gamma$, let $\supp (p) = \{ 0 \} \cup \{ \gamma > 0 : p(\gamma) \neq
\emptyset \}$ denote the {\em support} of $p$. First of all, let us note that this is indeed an iteration, that is,
$\PP_\gamma \embed \PP_\delta$ for $\gamma < \delta$. This is straightforward, for if $p \in \PP_\delta$ and
$q' \in \PP_\gamma$ with $q' \leq p \re \gamma$, then $q \in \PP_\delta$ defined by $q \re \gamma = q'$ and
$q (\epsilon) = p (\epsilon)$ for $\gamma \leq \epsilon <\delta$ is a common extension of $q'$ and $p$. 

Here is a sufficient criterion for compatibility of conditions. 

\begin{lem}  \label{compat-crit}
Assume $p,q \in \PP_\gamma$ are such that
\begin{itemize}
\item $\sigma_\alpha^p = \sigma_\alpha^q$ for all $\alpha \in F^p \cap F^q$,
\item  for all $\delta \in \supp (p) \cap \supp (q)$ with $\delta > 0$ and all $(\sigma, n) \in \twolom \times \omega$,
\[ (\sigma,n) \in p(\delta) \; \Loleriar \; (\sigma,n) \in q (\delta). \]
\end{itemize}
Then $p$ and $q$ are compatible with a canonical common extension $r$ given by 
\begin{itemize}
\item $\supp (r) = \supp (p) \cup \supp (q)$,
\item $F^r = F^p \cup F^q$,
\item $\sigma_\alpha^r = \begin{cases} \sigma_\alpha^p & \mbox{if } \alpha \in F^p \\ \sigma_\alpha^q & 
   \mbox{if } \alpha \in F^q, \\ \end{cases}$
\item $r(\delta) = p(\delta) \cup q (\delta)$ for all $\delta \in \supp (r)$ with $\delta > 0$.
\end{itemize}
\end{lem}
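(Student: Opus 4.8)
The plan is to prove the statement by induction on $\gamma$, establishing simultaneously that $r \in \PP_\gamma$ and that $r \leq p$, $r \leq q$. The first thing I would check is that the induction hypothesis actually applies to restrictions: for every $\delta \leq \gamma$ the conditions $p\re\delta$ and $q\re\delta$ again satisfy the two hypotheses of the lemma. Since the Cohen part lives at coordinate $0$, we have $F^{p\re\delta} = F^p$ and $\sigma^{p\re\delta}_\alpha = \sigma^p_\alpha$ (and likewise for $q$), so the first hypothesis is inherited verbatim; and $\supp(p\re\delta)\cap\supp(q\re\delta)$ is just the part of $\supp(p)\cap\supp(q)$ below $\delta$, so the second hypothesis is inherited as well. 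Consequently the canonical object attached to $p\re\delta,q\re\delta$ is exactly $r\re\delta$, and the induction hypothesis yields $r\re\delta \in \PP_\delta$ with $r\re\delta \leq p\re\delta, q\re\delta$.

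For the base and limit steps there is essentially nothing to do. At $\gamma = 1$ the only coordinate is the Cohen coordinate: $r(0)$ is well defined because the first hypothesis guarantees $\sigma^p_\alpha = \sigma^q_\alpha$ whenever $\alpha \in F^p \cap F^q$, and it extends both $p(0)$ and $q(0)$ since $F^r = F^p\cup F^q$ and $\sigma^r$ agrees with $\sigma^p$ on $F^p$ and with $\sigma^q$ on $F^q$. At a limit $\gamma$ the support $\supp(r) = \supp(p)\cup\supp(q)$ is finite, so there is $0<\delta<\gamma$ with $r(\epsilon)=\emptyset$ for $\delta\leq\epsilon<\gamma$; the induction hypothesis gives $r\re\delta\in\PP_\delta$, which witnesses $r\in\PP_\gamma$, and the ordering at limits reduces $r\leq p,q$ to the already-known inequalities $r\re\delta'\leq p\re\delta', q\re\delta'$ for $\delta'<\gamma$.

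The successor step $\gamma = \eta + 1$ with $\eta\geq 1$ is where the content lies. By the first paragraph $r\re\eta\in\PP_\eta$ and $r\re\eta\leq p\re\eta, q\re\eta$, so it remains to verify that $r(\eta) = p(\eta)\cup q(\eta)$ is an admissible value, after which $r\leq p,q$ is immediate from $r(\eta)\supseteq p(\eta), q(\eta)$. The first admissibility clause is routine: if $(\alpha,n)\in r(\eta)$ with $\alpha\in\kappa$, then this pair lies in $p(\eta)$ or in $q(\eta)$, say in $p(\eta)$, so $\alpha\in F^p\subseteq F^r$ and $p\re\eta\forces\alpha\notin\dot A_\eta$; since $r\re\eta\leq p\re\eta$, monotonicity of the forcing relation yields $r\re\eta\forces\alpha\notin\dot A_\eta$.

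I expect the second admissibility clause, the incompatibility requirement, to be the main obstacle, because a priori the two members $(\alpha,n)$ and $(\sigma,n)$ of $r(\eta)$ witnessing a potential failure could come from different conditions, one from $p(\eta)$ and the other from $q(\eta)$, and then neither the validity of $p$ nor that of $q$ applies directly. This is exactly the case the second hypothesis is designed to rule out. Concretely, suppose $(\alpha,n)\in r(\eta)$ with $\alpha\in\kappa$ and $(\sigma,n)\in r(\eta)$ with $\sigma\in\twolom$. If both lie in $p(\eta)$ (resp.\ both in $q(\eta)$), then validity of $p$ (resp.\ $q$) gives that $\sigma$ is incompatible with $\sigma^p_\alpha=\sigma^r_\alpha$ (resp.\ $\sigma^q_\alpha=\sigma^r_\alpha$). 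In the mixed case both $p(\eta)$ and $q(\eta)$ are nonempty, so $\eta\in\supp(p)\cap\supp(q)$, and the second hypothesis forces the $\twolom\times\omega$-parts of $p(\eta)$ and $q(\eta)$ to coincide; hence $(\sigma,n)$ in fact belongs to the \emph{same} condition as $(\alpha,n)$, reducing the mixed case to one of the pure cases just handled. This closes the induction and shows that $r$ is a common extension of $p$ and $q$.
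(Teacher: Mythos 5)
Your proposal is correct and follows essentially the same route as the paper: a simultaneous induction on the coordinate showing $r\re\delta \in \PP_\delta$ and $r\re\delta \leq p\re\delta, q\re\delta$, with the key point being that in the ``mixed'' case of the incompatibility clause, nonemptiness of both $p(\eta)$ and $q(\eta)$ puts $\eta$ in $\supp(p)\cap\supp(q)$, so the second hypothesis collapses the mixed case to a pure one. Your added checks (inheritance of the hypotheses under restriction, $\alpha \in F^r$, finiteness of supports at limits) are details the paper leaves implicit, not a different argument.
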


\begin{proof}
By induction on $\delta \leq \gamma$ we simultaneously prove that $r\re\delta$ as defined in the
lemma is indeed a condition and that $r\re\delta \leq p \re\delta , q \re\delta$ holds. 
For $\delta = 1$, this is straightforward by assumption. The limit step is also clear.

So suppose this has been proved for $\delta \geq 1$, and we show it for $\delta + 1$.
If $(\alpha,n) \in r(\delta)$ for some $\alpha$ and $n$, then, without loss of generality, we may assume
$(\alpha,n) \in p(\delta)$. Thus $p \re \delta \forces \alpha \notin \dot A_\delta$ and, since $r\re\delta \leq p\re\delta$ by
induction hypothesis, we also have $r\re\delta \forces \alpha\notin \dot A_\delta$, as
required. 

Next assume $(\alpha,n)$ and $(\sigma,n)$ belong to $r(\delta)$ for some $\alpha, \sigma$ and $n$.
If both belong to either $p(\delta)$ or $q(\delta)$, there is nothing to show. So 
we may assume without loss of generality $(\alpha,n) \in p(\delta)$ and $(\sigma,n) \in q (\delta)$.
In particular, $\delta \in \supp (p) \cap \supp (q)$ and, by assumption, $(\sigma,n) \in p(\delta)$ follows.
Since $p$ is a condition, $\sigma_\alpha^r = \sigma_\alpha^p$ and $\sigma$ are incompatible,
as required.

Thus we have proved that $r\re\delta + 1 \in \PP_{\delta + 1}$, and $r \re \delta + 1 \leq p \re\delta + 1,
q \re\delta +1$ now follows easily. This completes the induction and the proof of the lemma.
\end{proof}

This lemma presents a basic pattern of how to define a common extension
of two conditions and then show by induction that the object defined really is a condition.
We shall use this pattern several times, see Lemma~\ref{compat-lem} and Claim~\ref{q-claim} below. 
An immediate consequence is:

\begin{cor}
$\PP_\gamma$ is ccc (and even satisfies Knaster's condition) for every $\gamma \leq \kappa^+$.
\end{cor}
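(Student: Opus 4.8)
The plan is to verify the stronger Knaster condition directly; since Knaster's condition implies ccc, this suffices, and the argument will be uniform in $\gamma$, requiring no separate induction. Given an arbitrary family $\{ p_\xi : \xi < \omega_1 \}$ of conditions in $\PP_\gamma$, I would extract an uncountable subfamily all of whose members are pairwise compatible. The idea is to thin out the family until any two of its members satisfy the two hypotheses of Lemma~\ref{compat-crit}; the compatibility, together with an explicit common extension, then comes for free from that lemma.

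First I would apply the $\Delta$-system lemma for finite sets (which needs no cardinal arithmetic for an $\omega_1$-sized family) to the supports $\supp(p_\xi)$, passing to an uncountable subfamily on which they form a $\Delta$-system with root $R_S$; within that subfamily I would apply it again to the finite sets $F^{p_\xi} \sub \kappa$, obtaining a root $R_F$. Recall that in a $\Delta$-system the pairwise intersections all equal the root, so for distinct $\xi,\eta$ in the resulting family we have $\supp(p_\xi) \cap \supp(p_\eta) \sub R_S$ and $F^{p_\xi} \cap F^{p_\eta} = R_F$.

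Next comes a pigeonhole refinement exploiting that $\twolom$ is countable. For each $\alpha \in R_F$ the string $\sigma_\alpha^{p_\xi}$ ranges over the countable set $\twolom$, and for each $\delta \in R_S \sem \{0\}$ the ``$\twolom$-part'' of $p_\xi(\delta)$, namely $p_\xi(\delta) \cap (\twolom \times \omega)$, ranges over the countably many finite subsets of $\twolom \times \omega$. Since $R_F$ and $R_S$ are finite, there are only countably many possible values for this entire finite block of data, so I would refine once more to an uncountable subfamily on which all of it is constant. For any two conditions $p_\xi, p_\eta$ in this final family, both hypotheses of Lemma~\ref{compat-crit} now hold: the Cohen strings agree on $F^{p_\xi} \cap F^{p_\eta} = R_F$, and the $\twolom$-parts of $p_\xi(\delta)$ and $p_\eta(\delta)$ coincide for every $\delta \in \supp(p_\xi) \cap \supp(p_\eta) \sub R_S$ with $\delta > 0$ (the excluded coordinate $\delta = 0$, which always lies in $R_S$, plays no role). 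Hence $p_\xi$ and $p_\eta$ are compatible, and the family witnesses Knaster's condition.

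The step I expect to require the most care is confirming that these two hypotheses, and nothing more, really do guarantee compatibility in our setting---in particular, that the ``$\kappa$-part'' of the conditions $p_\xi(\delta)$ (the pairs $(\alpha,n)$ with $\alpha \in \kappa$) need not be controlled at all. This is exactly what Lemma~\ref{compat-crit} already delivers: its explicit common extension sets $r(\delta) = p(\delta) \cup q(\delta)$, and the induction in its proof verifies that the resulting object is a genuine condition, including the clauses $r\re\delta \forces \alpha \notin \dot A_\delta$ and the incompatibility of $\sigma_\alpha^r$ with the relevant $\sigma$. Consequently no further argument about the names $\dot A_\delta$ is needed here; the entire content of the corollary is the combinatorial thinning described above.
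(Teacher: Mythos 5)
Your proof is correct and follows exactly the paper's route: the paper's own argument is just the one-line ``straightforward $\Delta$-system argument'' reducing pairwise compatibility to the hypotheses of Lemma~\ref{compat-crit}, and your $\Delta$-systems on $\supp(p_\xi)$ and $F^{p_\xi}$ plus the countable pigeonhole on the root data are precisely the details that line omits. Your handling of the side points (the coordinate $\delta=0$, and the fact that the $\kappa$-parts of the $p_\xi(\delta)$ need no control) is also accurate.
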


\begin{proof}
Let $\{ p_\zeta: \zeta < \omega_1 \} \sub \PP_\gamma$. By a straightforward $\Delta$-system argument
we see that we may assume that for any $\zeta < \xi  < \omega_1$, $p = p_\zeta$ and $q = p_\xi$
satisfy the assumptions of the previous lemma. Hence the $p_\zeta$ are pairwise compatible.
\end{proof}

Furthermore, compatible conditions sort of have a ``minimal" extension.

\begin{lem}   \label{compat-lem}
Assume $p,q \in \PP_\gamma$ are compatible with common extension $r$. Then there is a condition
$s = s^{p,q,r} \in \PP_\gamma$ with $r \leq s \leq p,q$ such that 
\begin{itemize}
\item $\supp (s) = \supp (p) \cup \supp (q)$,
\item $F^s = F^p \cup F^q$,
\item $\sigma_\alpha^s = \sigma_\alpha^r$ for all $\alpha \in F^s$,
\item $s(\delta) = p(\delta) \cup q(\delta)$ for all $\delta \in \supp (s)$ with $\delta > 0$.
\end{itemize}
\end{lem}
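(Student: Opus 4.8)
The plan is to follow the pattern already displayed in the proof of Lemma~\ref{compat-crit}: I would define the candidate $s$ to be exactly the object specified by the four bullet points in the statement, and then prove by induction on $\delta \leq \gamma$ that $s\re\delta$ is a condition and that $r\re\delta \leq s\re\delta \leq p\re\delta, q\re\delta$. Before starting the induction I would record the facts coming from $r \leq p$ and $r \leq q$ that get used repeatedly: we have $F^r \supseteq F^p \cup F^q = F^s$; for every $\alpha \in F^p$ the stem $\sigma_\alpha^r$ extends $\sigma_\alpha^p$, and likewise $\sigma_\alpha^r \supseteq \sigma_\alpha^q$ for $\alpha \in F^q$; and $r(\delta) \supseteq p(\delta) \cup q(\delta) = s(\delta)$ for every $\delta > 0$. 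In particular, since $\sigma_\alpha^s = \sigma_\alpha^r$, the stem of $s$ at each $\alpha$ already extends the corresponding stems of $p$ and $q$, so the ``$s \leq p, q$'' halves of the inequalities are essentially built into the definition.

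For the induction, the base case $\delta = 1$ involves only the Cohen coordinate: $s\re 1 \in \PP_1$ trivially, $s\re 1 \leq p\re 1, q\re 1$ by the stem extensions just noted, and $r\re 1 \leq s\re 1$ because $F^r \supseteq F^s$ and the two conditions have identical stems. The limit step is immediate, each $s\re\delta$ being determined coordinatewise below $\delta$. The successor step $\delta + 1$ is where the content lies, and it reduces to checking the two defining clauses of $\PP_{\delta+1}$ for $s(\delta) = p(\delta) \cup q(\delta)$. For the first clause, if $(\alpha,n) \in s(\delta)$ then without loss of generality $(\alpha,n) \in p(\delta)$, so $\alpha \in F^p \sub F^s$ and $p\re\delta \forces \alpha \notin \dot A_\delta$; since $s\re\delta \leq p\re\delta$ by the induction hypothesis, $s\re\delta \forces \alpha \notin \dot A_\delta$, as required. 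For the second clause, suppose $(\alpha,n), (\sigma,n) \in s(\delta)$; the point is that $s(\delta) \sub r(\delta)$, so both pairs already lie in $r(\delta)$, and since $r$ is a condition, $\sigma_\alpha^r$ and $\sigma$ are incompatible; as $\sigma_\alpha^s = \sigma_\alpha^r$, this is exactly what is needed. It then follows that $s\re\delta + 1 \in \PP_{\delta+1}$, and the inequalities $r\re\delta + 1 \leq s\re\delta + 1 \leq p\re\delta + 1, q\re\delta + 1$ drop out of the induction hypothesis together with $s(\delta) \supseteq p(\delta), q(\delta)$ and $r(\delta) \supseteq s(\delta)$.

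I do not expect a real obstacle; this is a routine induction of the kind already illustrated. The one step that must not be glossed over is the choice of stems, and it is the heart of why the statement is phrased with $\sigma_\alpha^s = \sigma_\alpha^r$ rather than $\sigma_\alpha^p$ or $\sigma_\alpha^q$. With a naive choice of stems from $p$ or $q$, a pair $(\alpha,n) \in p(\delta)$ together with $(\sigma,n) \in q(\delta)$ could violate the incompatibility clause, since nothing forces $\sigma_\alpha^p$ to be incompatible with a string $\sigma$ contributed by $q$. Copying the stems from the given common extension $r$ repairs exactly this, and it is the only place in the argument where the hypothesis that $r$ is a \emph{common} extension of $p$ and $q$ (rather than an arbitrary condition) is genuinely used.
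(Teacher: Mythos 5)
Your proposal is correct and follows essentially the same argument as the paper: the same induction on $\delta \leq \gamma$, the same WLOG reduction for the first clause of the successor step, and the same key observation that $s(\delta) \subseteq r(\delta)$ lets the incompatibility clause be inherited from $r$ via $\sigma_\alpha^s = \sigma_\alpha^r$. Your closing remark about why the stems must be copied from $r$ rather than from $p$ or $q$ is accurate motivation that the paper leaves implicit.
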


\begin{proof}
By induction on $\delta\leq\gamma$ we simultaneously prove that $s\re \delta$ as defined in the lemma
is indeed a condition and that $r\re\delta \leq s\re\delta \leq p\re\delta,q\re\delta$ holds.
For $\delta = 1$ this is obvious. The limit step is also straightforward.

So assume this has been proved for $\delta \geq 1$. First suppose that $(\alpha,n) \in s(\delta)$ for some
$\alpha$ and $n$. Without loss of generality we may assume that $(\alpha,n) \in p(\delta)$.
Thus $p\re\delta \forces \alpha \notin \dot A_\delta$. Since $s\re\delta \leq p\re\delta$ by induction
hypothesis, also $s \re\delta \forces \alpha \notin \dot A_\delta$, as required.

Next suppose $(\alpha,n) , (\sigma,n)$ in $s(\delta)$ for some $\alpha,\sigma$ and $n$. 
Since $r\leq p,q$, $r(\delta) \supseteq s(\delta)$, and $(\alpha,n) , (\sigma,n) \in r(\delta)$ follows.
In particular, $\sigma^s_\alpha = \sigma^r_\alpha$ and $\sigma$ are incompatible in $\twolom$,
as required. 

Thus $s\re \delta + 1 \in \PP_{\delta + 1}$ and $r \re \delta + 1 \leq s \re \delta + 1 \leq p \re \delta + 1, q \re \delta + 1$
is now obvious. This completes the induction and the proof of the lemma.
\end{proof}

In particular, two conditions $p,q$ are compatible iff they have a common extension $s$
with $\supp (s) = \supp (p) \cup \supp (q)$, $F^s = F^p\cup F^q$, $\sigma_\alpha^s \supseteq \sigma_\alpha^p, \sigma_\alpha^q$
for all $\alpha \in F^s$, and $s(\delta) = p(\delta) \cup q(\delta)$ for all $\delta \in \supp (s)$. 
(Lemma~\ref{compat-lem} will not be needed but gives some motivation for Claim~\ref{q-claim} below.)

Let $\dot G$ be the name for the $\PP_{\kappa^+}$-generic filter. 
For $0 < \gamma < \kappa^+$ and $n \in\omega$ define $\PP_{\kappa^+}$-names 
(more explicitly, $\PP_{\gamma + 1}$-names)
\[ \dot U_{\gamma,n} = \bigcup \{ [\sigma] : (\sigma,n) \in p(\gamma) \mbox{ for some } p \in \dot G \} \mbox{ and } 
\dot H_\gamma = \bigcap_{n \in\omega} \dot U_{\gamma , n}. \]
Clearly, the $\dot U_{\gamma,n}$ are names for open sets, and $\dot H_\gamma$ is the name for a $G_\delta$ set.
Also let
\[ \dot c_\alpha = \bigcup \{ \sigma : \sigma = \sigma^p_\alpha \mbox{ for some } p \in \dot G \} \]
be the canonical name for the Cohen real added in coordinate $\alpha$ of stage $1$ of the forcing,
for each $\alpha < \kappa$.

\begin{lem}   \label{Q-lem}
{\rm (i)} Assume $n \in\omega, \alpha < \kappa , \gamma < \kappa^+$ and $p \in\PP_{\kappa^+}$ are given
such that $p\re\gamma \forces_\gamma \alpha \in \dot A_\gamma$. Then there are $q \leq p$ and $\sigma
\sub \sigma_\alpha^q$ such that $(\sigma,n) \in q(\gamma)$. 

{\rm (ii)} Assume $\alpha < \kappa , \gamma < \kappa^+$ and $p \in\PP_{\kappa^+}$ are given
such that $p\re\gamma \forces_\gamma \alpha \notin \dot A_\gamma$. Then there are $q \leq p$ and $n
\in\omega$ such that $(\alpha,n) \in q(\gamma)$. 
\end{lem}

\begin{proof}
(i) Assume $n,\alpha,\gamma, p$ are given as required. Then clearly $(\alpha,n) \notin p(\gamma)$.
For $\beta \in F^p$ extend $\sigma^p_\beta$ to $\sigma_\beta^q$ such that they are pairwise incompatible. 
This defines $q\re 1$ (in particular, $F^q = F^p$). For $\delta >0$ let
\[ q(\delta) = \begin{cases} p(\delta) & \mbox{if } \delta \neq\gamma  \\
   p(\delta) \cup \{ (\sigma_\alpha^q , n) \}  & \mbox{if } \delta = \gamma \\ \end{cases} \]
(in particular, $\supp (q) = \supp (p) \cup \{\gamma \}$). It is easy to see that $q$ is a condition and that it strengthens $p$.

(ii) Assume $\alpha,\gamma,p$ are given as required. Let $F^q = F^p \cup \{ \alpha \}$ and let $\sigma_\alpha^q$
be either $\sigma_\alpha^p$ (if $\alpha \in F^p$) or arbitrary (otherwise).
Let $n$ be large enough so that no $(\sigma,n)$ belongs to $p(\gamma)$.
For $\delta >0$ let
\[ q(\delta) = \begin{cases} p(\delta) & \mbox{if } \delta \neq\gamma  \\
   p(\delta) \cup \{ (\alpha , n) \}  & \mbox{if } \delta = \gamma \\ \end{cases} \]
(in particular, $\supp (q) = \supp (p) \cup \{\gamma \}$).
Again, it is easy to see that $q$ is a condition and that it strengthens $p$.
\end{proof}

\begin{cor}
{\rm (i)} Assume $\alpha < \kappa , \gamma < \kappa^+$ and $p \in\PP_{\kappa^+}$ are such that 
$p\re\gamma \forces_\gamma \alpha \in \dot A_\gamma$. Then $p \re\gamma+1 \forces_{\gamma+1} \dot c_\alpha \in \dot H_\gamma$.

{\rm (ii)} Assume $\alpha < \kappa , \gamma < \kappa^+$ and $p \in\PP_{\kappa^+}$ are such that 
$p\re\gamma \forces_\gamma \alpha \notin \dot A_\gamma$. Then $p \re\gamma+1 \forces_{\gamma+1} \dot c_\alpha \notin \dot H_\gamma$.
\end{cor}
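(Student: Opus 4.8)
The plan is to prove both parts by combining Lemma~\ref{Q-lem} with a genericity/density argument, using the definitions of $\dot H_\gamma$ and $\dot c_\alpha$. Throughout, I would work below the condition $p\re\gamma+1$ in $\PP_{\gamma+1}$, and recall that $\dot c_\alpha \in \dot H_\gamma$ is forced iff $\dot c_\alpha \in \dot U_{\gamma,n}$ is forced for every $n \in \omega$, since $\dot H_\gamma = \bigcap_n \dot U_{\gamma,n}$. The key observation is that membership of $\dot c_\alpha$ in $\dot U_{\gamma,n}$ is decided by whether some pair $(\sigma,n)$ with $\sigma \subseteq \dot c_\alpha$ appears in the $\gamma$-th coordinate of a generic condition.

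For part (i), I would fix an arbitrary $n \in \omega$ and show that the set of conditions $q \leq p\re\gamma+1$ forcing $\dot c_\alpha \in \dot U_{\gamma,n}$ is dense below $p\re\gamma+1$; by genericity this yields $p\re\gamma+1 \forces \dot c_\alpha \in \dot U_{\gamma,n}$, and since $n$ was arbitrary, $p\re\gamma+1 \forces \dot c_\alpha \in \dot H_\gamma$. Given any $q' \leq p\re\gamma+1$, I would first arrange (by deciding a longer initial segment of $\sigma^{q'}_\alpha$ if necessary, and noting $q'\re\gamma \forces \alpha \in \dot A_\gamma$ still holds) that Lemma~\ref{Q-lem}(i) applies; it produces $q \leq q'$ and $\sigma \subseteq \sigma^q_\alpha$ with $(\sigma,n) \in q(\gamma)$. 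Then $q \forces \sigma \subseteq \dot c_\alpha$ (because $\sigma \subseteq \sigma^q_\alpha$ and $\dot c_\alpha$ extends $\sigma^q_\alpha$), so $q \forces \dot c_\alpha \in [\sigma] \subseteq \dot U_{\gamma,n}$, as desired.

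For part (ii), I would show the reverse: $p\re\gamma+1 \forces \dot c_\alpha \notin \dot H_\gamma$ by producing a single $n$ for which $p\re\gamma+1 \forces \dot c_\alpha \notin \dot U_{\gamma,n}$. Apply Lemma~\ref{Q-lem}(ii) to get $q \leq p$ and $n \in \omega$ with $(\alpha,n) \in q(\gamma)$; here I would replace $p$ by $q$, i.e. argue below $q\re\gamma+1$, and note that by the genericity of $\dot c_\alpha$ and a density argument any condition forcing $\dot c_\alpha \in \dot U_{\gamma,n}$ must contain some $(\sigma,n)$ with $\sigma$ compatible with $\dot c_\alpha$, hence with $\sigma^q_\alpha$. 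But the second clause in the definition of $\PP_{\gamma+1}$ forbids any condition extending $q$ from containing such a $(\sigma,n)$ alongside $(\alpha,n)$ unless $\sigma^q_\alpha \perp \sigma$, so no generic extension of $\dot c_\alpha$ can land in $[\sigma]$; thus $\dot c_\alpha \notin \dot U_{\gamma,n}$ is forced.

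The main obstacle is the bookkeeping in part (ii): I must verify that the incompatibility clause of the forcing definition genuinely prevents $\dot c_\alpha$ from ever entering $\dot U_{\gamma,n}$ in any generic extension below $q\re\gamma+1$, i.e. that for \emph{every} $(\sigma,n)$ that could be added at coordinate $\gamma$ by a stronger condition, the real $\dot c_\alpha$ avoids $[\sigma]$. This requires carefully tracking how $\sigma^q_\alpha$ grows to $\dot c_\alpha$ and confirming that the prohibition ``$\sigma^p_\alpha$ and $\sigma$ incompatible'' applies to \emph{all} future conditions (since $F^q \ni \alpha$ persists and $\sigma_\alpha$ only grows), so that $\dot c_\alpha \supseteq \sigma^q_\alpha$ is incompatible with every such $\sigma$. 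Once this persistence is made precise, the corollary follows directly from the two parts of Lemma~\ref{Q-lem}.
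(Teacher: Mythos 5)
Your part (i) is correct and is exactly the paper's argument: the paper dismisses (i) as immediate from Lemma~\ref{Q-lem}(i), and your density argument below $p\re\gamma+1$ is the right way to spell that out. The problem is in part (ii), where there is a genuine quantifier error. You propose to find a \emph{single} $n$ with $p\re\gamma+1 \forces \dot c_\alpha \notin \dot U_{\gamma,n}$, and to get it you ``replace $p$ by $q$,'' where $q\leq p$ and $(\alpha,n)\in q(\gamma)$ come from one application of Lemma~\ref{Q-lem}(ii) to $p$. No such single $n$ need exist: if $(\alpha,n)\notin p(\gamma)$ (for instance if $p(\gamma)=\emptyset$), then one can extend $p\re\gamma+1$ to a condition $q'$ containing $(\sigma,n)$ in coordinate $\gamma$ with $\sigma\sub\sigma^{q'}_\alpha$ --- first make the strings $\sigma^{q'}_\beta$, $\beta\in F^{q'}$, pairwise incompatible as in the proof of Lemma~\ref{Q-lem}(i), then put $\sigma=\sigma^{q'}_\alpha$ --- and this $q'$ forces $\dot c_\alpha\in[\sigma]\sub\dot U_{\gamma,n}$. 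So only those $n$ with $(\alpha,n)$ already in $p(\gamma)$ could work, and in general there are none. Consequently ``replacing $p$ by $q$'' is not a harmless normalization here: what your argument actually establishes is that \emph{some} extension of $p\re\gamma+1$ forces $\dot c_\alpha\notin\dot H_\gamma$, i.e.\ that $p\re\gamma+1$ does not force $\dot c_\alpha\in\dot H_\gamma$ --- strictly weaker than the corollary, which asserts that $p\re\gamma+1$ forces the negation.

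The repair is the density structure you already used in part (i), and it is how the paper argues: let $q'\leq p\re\gamma+1$ be \emph{arbitrary}; since $q'\re\gamma\leq p\re\gamma$, still $q'\re\gamma\forces\alpha\notin\dot A_\gamma$; apply Lemma~\ref{Q-lem}(ii) to $q'$ to get $q\leq q'$ and $n$ (now depending on $q'$) with $(\alpha,n)\in q(\gamma)$; then show $q\forces\dot c_\alpha\notin\dot U_{\gamma,n}$. Since $\dot H_\gamma\sub\dot U_{\gamma,n}$, conditions forcing $\dot c_\alpha\notin\dot H_\gamma$ are thus dense below $p\re\gamma+1$, which yields the corollary. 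For the final step your persistence argument is essentially the paper's and is sound, with one correction: what the definition of the forcing gives you, for $r\leq q$ with $(\sigma,n)\in r(\gamma)$ (and hence also $(\alpha,n)\in r(\gamma)$), is that $\sigma$ is incompatible with $\sigma^{r}_\alpha$, not with $\sigma^{q}_\alpha$ as you wrote ($\sigma$ may well be compatible with the shorter string $\sigma^q_\alpha$, e.g.\ extend it while branching off $\sigma^r_\alpha$). This weaker statement is exactly what is needed: if such an $r$ lies in the generic filter, then $\sigma^r_\alpha\sub\dot c_\alpha$, whence $\sigma\not\sub\dot c_\alpha$ and $\dot c_\alpha$ avoids $[\sigma]$.
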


\begin{proof}
(i) This is immediate from Lemma~\ref{Q-lem} (i). 

(ii) Let $q \leq p$ be arbitrary. By Lemma~\ref{Q-lem} (ii), there are $r \leq q$ and $n \in \omega$ with $(\alpha,n) \in r (\gamma)$.
It suffices to show that $r \forces \dot c_\alpha \notin \dot U_{\gamma,n}$.

Assume that $G$ is $\PP_{\kappa^+}$-generic with $r \in G$ and $c_\alpha \in U_{\gamma,n}$. By definition of $U_{\gamma,n}$,
there are $r' \leq r$ and $\sigma \in \twolom$ such that $r' \in G$, $(\sigma,n) \in r'(\gamma)$ and $\sigma \sub c_\alpha$.
Hence there is $r'' \leq r'$ such that $r'' \in G$ and $\sigma \sub \sigma^{r''}_\alpha$. Since $(\alpha,n), (\sigma,n) \in r'' (\gamma)$,
this contradicts the definition of a condition. Hence we must have $c_\alpha \notin U_{\gamma,n}$, as required.
\end{proof}

As a consequence, we see that 
\[ \forces_{\gamma + 1} \dot H_\gamma \cap \{ \dot c_\alpha : \alpha < \kappa \} = \{ \dot c_\alpha : \alpha \in \dot A_\gamma \}. \]
Thus we obtain:

\begin{cor}
$\forces_{\kappa^+} `` \{ \dot c_\alpha : \alpha < \kappa \}$ is a Q-set$"$.
\end{cor}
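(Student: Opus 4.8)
The plan is to deduce the final corollary directly from the displayed equation
\[ \forces_{\gamma + 1} \dot H_\gamma \cap \{ \dot c_\alpha : \alpha < \kappa \} = \{ \dot c_\alpha : \alpha \in \dot A_\gamma \} \]
together with the book-keeping assumption that every $\PP_{\kappa^+}$-name $\dot A$ for a subset of $\kappa$ equals some $\dot A_\gamma$ at some stage $\gamma$. First I would recall what must be verified in the final model $V[G_{\kappa^+}]$: that $C = \{ c_\alpha : \alpha < \kappa \}$ is uncountable and that every subset of $C$ is a relative $G_\delta$ in $C$. Uncountability (indeed $|C| = \kappa$) follows from the fact that distinct Cohen reals $c_\alpha$ are distinct, since the stage-$1$ forcing $\CC_\kappa$ makes $\sigma^p_\alpha$ and $\sigma^p_\beta$ grow into incompatible reals for $\alpha \neq \beta$ (this is the standard property of the Cohen poset for adding $\kappa$ reals). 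So the real content is the $G_\delta$ property.

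The key step is as follows. Let $B \subseteq C$ be an arbitrary subset in $V[G_{\kappa^+}]$. Set $A = \{ \alpha < \kappa : c_\alpha \in B \} \subseteq \kappa$, and let $\dot A$ be a $\PP_{\kappa^+}$-name for $A$. By the book-keeping hypothesis there is $\gamma < \kappa^+$ such that $\PP_\gamma$ (equivalently $\PP_{\kappa^+}$) forces $\dot A = \dot A_\gamma$; hence in $V[G_{\kappa^+}]$ we have $A = A_\gamma$, where $A_\gamma$ is the realization of $\dot A_\gamma$. The displayed equation, interpreted in the generic extension, then yields
\[ H_\gamma \cap C = \{ c_\alpha : \alpha \in A_\gamma \} = \{ c_\alpha : \alpha \in A \} = B, \]
where $H_\gamma$ is the realization of the $G_\delta$ name $\dot H_\gamma$. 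Since $\dot H_\gamma$ is a name for a $G_\delta$ subset of $2^\omega$ (being the intersection of the open sets $\dot U_{\gamma,n}$), its realization $H_\gamma$ is a genuine $G_\delta$ set in $2^\omega$, and therefore $B = H_\gamma \cap C$ is a relative $G_\delta$ subset of $C$. As $B$ was arbitrary, every subset of $C$ is relatively $G_\delta$, so $C$ is a Q-set.

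I do not expect a genuine obstacle here, since all the hard work is already packaged into the preceding corollaries and the definitions of $\dot H_\gamma$ and $\dot c_\alpha$; the argument is essentially an absoluteness-free reading of the displayed equation in $V[G_{\kappa^+}]$. The one point that needs a little care is the transition from the name-level identity ``$\PP_\gamma$ forces $\dot A = \dot A_\gamma$'' to the equation $A = A_\gamma$ in the extension, and the fact that the displayed equation is forced by $\PP_{\gamma+1}$ but, since $\PP_{\gamma+1} \embed \PP_{\kappa^+}$ and $\dot H_\gamma$, $\dot c_\alpha$ are interpreted the same way in the full extension, it continues to hold in $V[G_{\kappa^+}]$. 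In other words, I would note explicitly that passing from the intermediate model $V[G_{\gamma+1}]$ to $V[G_{\kappa^+}]$ does not disturb the identity $H_\gamma \cap C = \{ c_\alpha : \alpha \in A_\gamma \}$, because both sides are computed from objects whose realizations are fixed by the earlier stages of the iteration. With this remark the proof of the corollary is complete.
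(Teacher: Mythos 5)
Your proof is correct and is precisely the argument the paper intends: the paper derives the corollary directly from the displayed equation together with the book-keeping hypothesis, leaving the details (choosing $\gamma$ with $A = A_\gamma$, and the persistence of the identity $H_\gamma \cap C = \{c_\alpha : \alpha \in A_\gamma\}$ from $V[G_{\gamma+1}]$ to $V[G_{\kappa^+}]$) implicit. You have simply written out those routine details, including the absoluteness remark, which is exactly the right reading.
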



To see that the square of the set of Cohen reals is not a Q-set, it clearly suffices to establish the following:

\begin{mainlem}   \label{nonQ-lem}
$\forces_{\kappa^+} ``\{ (\dot c_\alpha,\dot c_\beta ) : \alpha < \beta <\kappa \}$ is not a relative $G_\delta$-set$"$.
\end{mainlem}

\begin{proof}
Assume that $\dot V_n$, $n \in\omega$, are $\PP_{\kappa^+}$-names of open sets in the plane $(\twoom)^2$
such that $\{ (\dot c_\alpha, \dot c_\beta) : \alpha < \beta < \kappa \} \sub \bigcap_{n\in\omega}\dot V_n$ is forced by the
trivial condition. We shall find $\beta < \alpha < \kappa$ such that the trivial condition
forces $(\dot c_\alpha, \dot c_\beta) \in \bigcap_{n\in\omega}\dot V_n$.
This is clearly sufficient.

There are $\dot S_n \sub (\twolom)^2$ such that for every $n \in \omega$, $\dot V_n = \bigcup \{ [\sigma] \times [\tau] : (\sigma,\tau) \in
\dot S_n \}$ is forced. For each $n \in \omega$ and each $(\sigma,\tau) \in (\twolom)^2$, let
$\{ p^j_{n,\sigma,\tau} : j \in \omega \}$ be a maximal antichain of conditions in $\PP_{\kappa^+}$ deciding the
statement $(\sigma,\tau) \in \dot S_n$. 

Let $Z = \{ z_i : i \in \omega \}$ be a countable set disjoint from $\kappa$. We
say that $\sft = (\Gamma^\sft,\Delta^\sft , \bar D^\sft = (D_\gamma^\sft: \gamma \in \Gamma^\sft\sem \{ 0 \} ) , \bar E^\sft = (E^\sft_\gamma ;
\gamma \in \Gamma^\sft \sem \{ 0 \}) , \bar\tau^\sft = (\tau_\zeta^\sft : \zeta \in \Delta^\sft))$ is a {\em pattern} if 
\begin{itemize}
\item $\Gamma^\sft \sub \kappa^+$ is finite, $0 \in \Gamma^\sft$,
\item $\Delta^\sft \sub \kappa \cup Z$ is finite and $\Delta^\sft \cap Z$ is an initial segment of $Z$, \\ i.e., $\Delta^\sft \cap Z = 
   \{ z_i : i < k \}$ for some $k$,
\item $D_\gamma^\sft \sub \twolom \times \omega$ is finite for $\gamma \in \Gamma^\sft$,
\item $E_\gamma^\sft \sub \Delta^\sft \times \omega$ is finite for $\gamma \in \Gamma^\sft$,
\item $\tau_\zeta^\sft \in \twolom$ for $\zeta \in \Delta^\sft$.
\end{itemize}
Usually we will omit the superscript $\sft$. Let $\PAT$ denote the collection of all patterns. 

Let $X \sub \kappa^+$ and $Y \sub \kappa$. Assume $0\in X$. Also let $p \in \PP_{\kappa^+}$.
We say that $(\Gamma , \Delta , \bar D , \bar E, \bar \tau) \in \PAT$ is the {\em $(X,Y)$-pattern of $p$}
if $\Gamma \sub X$, $\Delta \sub Y \cup Z$ and for some (necessarily unique) one-to-one function $\varphi = \varphi^p : 
\Delta \to \kappa$ with $\varphi \re (\Delta \cap Y) = \id$ and $\varphi \re (\Delta \cap Z) : Z \to \kappa \sem Y$ order-preserving
(i.e., if $i<j$ and $z_i , z_j \in \Delta \cap Z$, then $\varphi (z_i) < \varphi (z_j)$),
the following hold:
\begin{itemize}
\item $\supp (p) \cap X = \Gamma$,
\item $F^p = \{ \varphi (\zeta) : \zeta \in \Delta \}$
   (in particular, $F^p \cap Y = \Delta \cap Y$ and $F^p \sem Y = \{ \varphi (\zeta) : \zeta \in \Delta \cap Z \}$),
\item $\sigma^p_{\varphi(\zeta)} = \tau_\zeta$ for $\zeta \in \Delta$,
\item for $\gamma \in \Gamma$ and $(\sigma,n) \in \twolom \times \omega$:
   \[ (\sigma,n) \in p (\gamma) \;\Loleriar\; (\sigma,n) \in D_\gamma, \]
\item for $\gamma \in \Gamma$ and $(\zeta,n) \in \Delta  \times \omega$:
   \[ (\varphi(\zeta), n) \in p (\gamma) \;\Loleriar\; (\zeta,n) \in E_\gamma. \]
\end{itemize}
Clearly each condition has a unique $(X,Y)$-pattern.

Let $\chi \geq (2^{\kappa^+})^+$, and let $M \prec H (\chi)$ be a countable elementary submodel containing
$\kappa$, $\PP_{\kappa^+}$, $Z$, $\PAT$, all $\dot V_n$, $\dot S_n$, and $\{ p^j_{n,\sigma,\tau} : j \in \omega, 
(\sigma,\tau) \in ( \twolom )^2 \}$.
Choose any ordinals $\beta, \alpha$ with $M \cap \omega_1 \leq \beta < \alpha < \omega_1$.
We shall see that $\beta$ and $\alpha$ are as required, that is, that $\forces (\dot c_\alpha, \dot c_\beta) \in \bigcap_{n \in\omega} \dot V_n$.
By a standard density argument, it suffices to prove the following:


\begin{mainclaim}
For all $n \in \omega$ and all $p \in \PP_{\kappa^+}$ there are $q \leq p$ and $\sigma_0, \tau_0 \in \twolom$
such that $\sigma_0 \sub \sigma_\alpha^q$, $\tau_0 \sub \sigma_\beta^q$ and $q \forces (\sigma_0 , \tau_0) \in \dot S_n$.
\end{mainclaim}

\begin{proof}
Fix $n \in\omega_2$. Let $p \in \PP_{\kappa^+}$ be given. By going over to a stronger condition, if necessary,
we may assume $\alpha, \beta \in F^p$. Let $X = M \cap \kappa^+$ and $Y = M \cap \kappa$.
Let $\sft = (\Gamma, \Delta, \bar D , \bar E, \bar\tau)$ be the $(X,Y)$-pattern of
$p$. Put $X_0 := \Gamma$ and $Y_0 := \Delta \cap \kappa$. Then we see that $X_0 = \Gamma = \supp (p) \cap X =
\supp (p) \cap M$ and $Y_0 = \Delta \cap Y = F^p \cap Y = F^p \cap M$. In particular, $\sft$ is
also the $(X_0, Y_0)$-pattern of $p$. Furthermore, unlike $X$ and $Y$, $X_0$ and $Y_0$ belong to $M$,
and so does $\sft$. Since $\alpha \in F^p \sem Y$, there is $i \in \omega$ such that $\varphi^p (z_i) = \alpha$.
So 
\[ H_\chi \models \exists p' \in \PP_{\kappa^+} \; \exists \alpha ' < \omega_1 \; (\sft \mbox{ is the } (X_0,Y_0) \mbox{-pattern of }
   p' \mbox{ and } \varphi^{p'} (z_i) = \alpha ' )\]
because this statement is true for $p ' = p$ and $\alpha ' = \alpha$.
By elementarity we obtain
\[ M \models \exists p' \in \PP_{\kappa^+} \; \exists \alpha ' < \omega_1 \; (\sft \mbox{ is the } (X_0,Y_0) \mbox{-pattern of }
   p' \mbox{ and } \varphi^{p'} (z_i) = \alpha ' ).\]
Let $p' \in M \cap \PP_{\kappa^+}$ and $\alpha ' < M \cap \omega_1 \leq \beta$ be the witnesses.

\begin{claim}
$p$ and $p'$ are compatible, with common extension $p''$ given canonically according to Lemma~\ref{compat-crit}.
\end{claim}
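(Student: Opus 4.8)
The plan is to verify that $p$ and $p'$ satisfy the two hypotheses of Lemma~\ref{compat-crit}; compatibility, together with the explicit canonical common extension $p''$, then follows immediately from that lemma. Two facts drive everything. First, $p' \in M$, so that $\supp (p')$ and $F^{p'}$, being finite sets definable from $p'$, are subsets of $M$; thus $\supp (p') \sub M \cap \kappa^+ = X$ and $F^{p'} \sub M \cap \kappa = Y$. Second, $\sft$ is simultaneously the $(X_0,Y_0)$-pattern of $p$ and of $p'$, so the data $\bar D$ and $\bar\tau$ describe the relevant coordinates of \emph{both} conditions, via the respective one-to-one maps $\varphi^p, \varphi^{p'}$.

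First I would handle the supports and the second bullet of Lemma~\ref{compat-crit}. Since $\supp (p') \sub X$ and $\supp (p) \cap X = \Gamma$ (from the pattern of $p$), we get $\supp (p) \cap \supp (p') \sub \Gamma$; conversely $\Gamma = X_0 \sub \supp (p)$ and $\Gamma = X_0 \sub \supp (p')$, because $\sft$ is the $(X_0,Y_0)$-pattern of each. Hence $\supp (p) \cap \supp (p') = \Gamma$. For $\delta \in \Gamma$ with $\delta > 0$ and $(\sigma,n) \in \twolom \times \omega$, the pattern clauses give $(\sigma,n) \in p(\delta) \Loleriar (\sigma,n) \in D_\delta \Loleriar (\sigma,n) \in p'(\delta)$, which is exactly the second hypothesis.

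The main point, and the step I expect to be the crux, is the first hypothesis, namely $\sigma^p_\zeta = \sigma^{p'}_\zeta$ on $F^p \cap F^{p'}$; for this I must first pin down $F^p \cap F^{p'}$. Both $F^p$ and $F^{p'}$ contain $Y_0 = \Delta \cap \kappa$ and carry $\sigma_\zeta = \tau_\zeta$ there, since $\varphi^p$ and $\varphi^{p'}$ are the identity on $\Delta \cap Y_0$. The remaining coordinates of $p$, namely $F^p \sem Y_0 = \{ \varphi^p (\zeta) : \zeta \in \Delta \cap Z \}$, lie in $\kappa \sem M$ (they are the images of the $Z$-part, mapped outside $Y = M \cap \kappa$), whereas $F^{p'} \sem Y_0 \sub M$ because $p' \in M$. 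These two ``new'' blocks are therefore disjoint, so $F^p \cap F^{p'} = Y_0$, on which the $\sigma$-values coincide. This is precisely where the placeholder set $Z$ does its work: it lets the Cohen coordinates newly activated by $p$ (which sit above $M$) avoid clashing with those of the model-internal condition $p'$. With both hypotheses verified, Lemma~\ref{compat-crit} yields that $p$ and $p'$ are compatible with the stated canonical common extension $p''$.
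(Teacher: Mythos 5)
Your proof is correct and follows essentially the same route as the paper: both verify the two hypotheses of Lemma~\ref{compat-crit} by combining $p' \in M$ (hence $\supp(p') \sub X$ and $F^{p'} \sub Y$) with the fact that $\sft$ is the shared $(X_0,Y_0)$-pattern, concluding $\supp(p)\cap\supp(p') = \Gamma$ with matching $D_\gamma$-parts and $F^p \cap F^{p'} = Y_0$ with matching $\tau_\zeta$-values. Your disjoint-blocks phrasing of the computation of $F^p \cap F^{p'}$ is just a restatement of the paper's identity $F^p \cap Y = \Delta \cap Y_0 = F^{p'} \cap Y_0$, not a different argument.
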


\begin{proof}
We need to verify the two conditions in Lemma~\ref{compat-crit}. 
Since $F^{p'} \sub M \cap \kappa = Y$ and $F^p \cap Y = F^p \cap Y_0 = \Delta \cap Y_0 = F^{p'} \cap Y_0$, we see
that $F^p \cap F^{p'} = \Delta \cap Y_0$ and $\sigma_\zeta^p = \tau_\zeta = \sigma_\zeta^{p'}$ for all $\zeta \in \Delta \cap Y_0$.
Similarly, since $\supp (p') \sub M \cap \kappa^+ = X$ and $\supp (p) \cap X = \supp (p) \cap X_0 = \Gamma = \supp (p') \cap X_0$,
we have that $\supp (p) \cap \supp (p') = \Gamma$ and, for $\gamma \in \Gamma$,
\[ (\sigma,n) \in p(\gamma) \; \Loleriar \; (\sigma,n) \in D_\gamma \; \Loleriar \; (\sigma,n) \in p' (\gamma). \]
Hence the conditions are indeed satisfied.
\end{proof}

Let $\psi = \varphi^{p'} \circ ( \varphi^p)^{-1}$. Clearly $\psi$ maps $F^p$ one-to-one and onto $F^{p'}$,
and we have $\alpha' = \psi (\alpha)$. We also note for later use that for all $\zeta \in F^p$,
$\sigma_\zeta^{p''} = \sigma_\zeta^p = \tau_{ ( \varphi^p)^{-1} (\zeta)} = \tau_{ ( \varphi^{p'})^{-1} (\psi (\zeta)) } =
\sigma_{\psi(\zeta)}^{p'} = \sigma_{\psi(\zeta)}^{p''}$, and for all $\gamma \in \Gamma$ and $\zeta \in F^p$,
\[ (\zeta,n) \in p (\gamma) \; \Loleriar \; ( (\varphi^p)^{-1} (\zeta) , n) \in E_\gamma \; \Loleriar \; ( \psi (\zeta) , n ) \in p' (\gamma). \]
Since $\forces (\dot c_{\alpha'} , \dot c_\beta) \in \dot V_n$, there are $\tilde p \leq p''$ and $\sigma_0, \tau_0 \in
\twolom$ such that
\[ \tilde p \forces (\dot c_{\alpha '} , \dot c_\beta) \in [ \sigma_0 ] \times [\tau_0] \sub \dot V_n, \]
that is, $\sigma_0 \sub \sigma_{\alpha '}^{\tilde p} , \tau_0 \sub \sigma_\beta^{\tilde p}$ and
$ \tilde p \forces (\sigma_0 , \tau_0 ) \in \dot S_n$.

By construction, for some $j \in \omega$, the condition $r = p^j_{n, \sigma_0, \tau_0} \in M$ is compatible
with $\tilde p$. In particular, $r$ must also force $(\sigma_0, \tau_0) \in \dot S_n$. Furthermore we know that
$\supp (r) \sub X$ and $F^r \sub Y$. By strengthening $\tilde p$, if necessary, we may assume that
$\tilde p \leq r$.

\begin{claim}   \label{q-claim}
$p$ and $r$ have a common extension $q$ such that $\sigma_\alpha^q = \sigma_{\alpha '}^{\tilde p}$
and $\sigma_\beta^q = \sigma_\beta^{\tilde p}$.
\end{claim}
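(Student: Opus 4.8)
The plan is to follow the ``basic pattern'' of Lemmas~\ref{compat-crit} and~\ref{compat-lem}: write down an explicit candidate $q$, with its stage-$1$ Cohen part pinned down as demanded, and then prove by induction on $\delta \le \kappa^+$ that $q\re\delta$ is a condition satisfying $q\re\delta \le p\re\delta, r\re\delta$. Concretely, I would take $\supp(q) = \supp(p)\cup\supp(r)$, $F^q = F^p \cup F^r$, and $q(\delta) = p(\delta)\cup r(\delta)$ for $\delta\in\supp(q)$ with $\delta>0$. For the Cohen coordinate I would set $\sigma_\alpha^q = \sigma_{\alpha'}^{\tilde p}$ and $\sigma_\zeta^q = \sigma_\zeta^{\tilde p}$ for all $\zeta\in F^q\sem\{\alpha\}$. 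Then $\sigma_\beta^q = \sigma_\beta^{\tilde p}$ (as $\beta\ne\alpha$) and $\sigma_\alpha^q = \sigma_{\alpha'}^{\tilde p}$ hold by construction, so the two stipulated equalities are built in; the work is to verify that $q$ is a condition below $p$ and $r$.

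First I would check $q\re 1 \le p\re 1, r\re 1$, i.e.\ that $\sigma_\zeta^q$ extends $\sigma_\zeta^p$ for $\zeta\in F^p$ and $\sigma_\zeta^r$ for $\zeta\in F^r$. For $\zeta\in F^r$ this is immediate from $\tilde p\le r$; for $\zeta\in F^p\sem\{\alpha\}$ from $\tilde p\le p$ (recall $\tilde p\le p''\le p$); and at $\alpha$ from the fact that $p$ and $p'$ carry the same pattern $\sft$, so $\sigma_\alpha^p = \tau_{z_i} = \sigma_{\alpha'}^{p'}$ and hence $\sigma_\alpha^q = \sigma_{\alpha'}^{\tilde p} \supseteq \sigma_{\alpha'}^{p'} = \sigma_\alpha^p$ because $\tilde p \le p'$. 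The successor and limit steps showing ``$q\re\delta$ is a condition'' then split into the two defining clauses. The clause requiring $q\re\delta \forces \alpha^*\notin\dot A_\delta$ whenever $(\alpha^*,n)\in q(\delta)$ is routine: such a pair comes from $p(\delta)$ or from $r(\delta)$, so the forcing statement holds below $p\re\delta$ or $r\re\delta$, and the induction hypothesis $q\re\delta\le p\re\delta, r\re\delta$ transfers it to $q\re\delta$, exactly as in the earlier lemmas.

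The content is in the incompatibility clause: if $(\alpha^*,n),(\sigma,n)\in q(\delta)=p(\delta)\cup r(\delta)$, I must show $\sigma_{\alpha^*}^q$ and $\sigma$ are incompatible. The organizing remark is that $\tilde p\le p$ and $\tilde p\le r$ give $q(\delta)\subseteq\tilde p(\delta)$, so both pairs already live in $\tilde p(\delta)$, and $\tilde p$ is a genuine condition. When $\alpha^*\ne\alpha$ we have $\sigma_{\alpha^*}^q=\sigma_{\alpha^*}^{\tilde p}$, and the incompatibility of $\sigma_{\alpha^*}^{\tilde p}$ with $\sigma$ is read off directly from $\tilde p$ being a condition.

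I expect the exceptional coordinate $\alpha^*=\alpha$ to be the crux, precisely because there $\sigma_\alpha^q=\sigma_{\alpha'}^{\tilde p}\ne\sigma_\alpha^{\tilde p}$ in general, so ``$\tilde p$ is a condition'' cannot be quoted verbatim. Here I would argue as follows. Since $\alpha\notin M$ while $F^r\subseteq Y\subseteq M$, we have $\alpha\notin F^r$, so $(\alpha,n)\in q(\delta)$ forces $(\alpha,n)\in p(\delta)$ and $\delta\in\supp(p)$. If the partner $(\sigma,n)$ also lies in $p(\delta)$, then $\sigma_\alpha^p$ and $\sigma$ are incompatible because $p$ is a condition, and $\sigma_\alpha^q\supseteq\sigma_\alpha^p$ preserves this. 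If instead $(\sigma,n)\in r(\delta)$, then $\delta\in\supp(p)\cap\supp(r)$; as $\supp(r)\subseteq X$ and $\supp(p)\cap X=\Gamma$, this forces $\delta\in\Gamma$, where the shared pattern $\sft$ governs both $p$ and $p'$. Applying the $E_\delta$-clause of the pattern to $p$ (with $\varphi^p(z_i)=\alpha$) and to $p'$ (with $\varphi^{p'}(z_i)=\alpha'$) yields the equivalence $(\alpha,n)\in p(\delta)\iff(\alpha',n)\in p'(\delta)$; hence $(\alpha',n)\in p'(\delta)\subseteq\tilde p(\delta)$, while $(\sigma,n)\in r(\delta)\subseteq\tilde p(\delta)$. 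Since $\tilde p$ is a condition, $\sigma_{\alpha'}^{\tilde p}=\sigma_\alpha^q$ and $\sigma$ are incompatible, as needed. Once every case is dispatched, $q\re\delta+1\le p\re\delta+1, r\re\delta+1$ follows at once, finishing the induction. The one genuinely delicate point is this routing of the conflict at $\alpha$ through its pattern-isomorphic copy $\alpha'$ inside $p'$, which is legitimate only because overlaps between $\supp(p)$ and $\supp(r)$ are confined to $\Gamma$.
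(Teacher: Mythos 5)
Your proof is correct, and its skeleton is the paper's: the same $\supp(q)=\supp(p)\cup\supp(r)$, $F^q=F^p\cup F^r$, and $q(\delta)=p(\delta)\cup r(\delta)$, verified by induction in the style of Lemmas~\ref{compat-crit} and~\ref{compat-lem}, with the crux --- a mixed pair $(\alpha,n)\in p(\delta)$, $(\sigma,n)\in r(\delta)$ --- resolved by observing that $\delta$ must lie in $\Gamma$ and routing the conflict through the pattern-copy $\alpha'$ inside $\tilde p$. The one genuine difference is your definition of the Cohen coordinates: the paper sets $\sigma_\zeta^q=\sigma_{\psi(\zeta)}^{\tilde p}$ for \emph{every} $\zeta\in F^p\sem F^r$ with $\zeta\neq\beta$, transporting all such coordinates through $\psi=\varphi^{p'}\circ(\varphi^p)^{-1}$, whereas you transport only the single coordinate $\alpha$ and set $\sigma_\zeta^q=\sigma_\zeta^{\tilde p}$ everywhere else. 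The two definitions agree at $\alpha$, at $\beta$, and on $F^r$, and both build in the Claim's two stipulated equalities; the payoff of yours is that the pattern equivalence $(\zeta,n)\in p(\delta)\Leftrightarrow(\psi(\zeta),n)\in p'(\delta)$ is invoked only at $\zeta=\alpha$, every other coordinate being dispatched directly from the fact that both offending pairs already lie in $\tilde p(\delta)$ while $\sigma_\zeta^q=\sigma_\zeta^{\tilde p}$ there. The paper, by contrast, must run that equivalence for all $\zeta\neq\beta$ (trivially so when $\zeta\in F^r$, where $\psi(\zeta)=\zeta$). So your variant is marginally leaner and isolates more sharply the unique point where elementarity is really used; the paper's uniform $\psi$-transport is the same idea applied across the board, at the cost of a slightly longer case analysis.
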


Note that we know already that $p$ and $r$ are compatible because they have common extension $\tilde p$;
however, here we construct a different common extension $q$ (in general $\sigma_\alpha^{\tilde p}$ and $\sigma_\alpha^q$
are distinct).

\begin{proof}
As in the proofs of Lemmata~\ref{compat-crit} and~\ref{compat-lem}, we define $q$ and 
then show by induction on $\gamma$ that $q \re \gamma
\in \PP_\gamma$ and that $q \re\gamma$ extends both $p\re\gamma$ and $r\re\gamma$.
\begin{itemize}
\item $\supp (q) = \supp (p) \cup \supp (r)$,
\item $F^{q} = F^{p} \cup F^{r}$,
\item for $\zeta \in F^{q}$, $\sigma_\zeta^{q} = \begin{cases} \sigma_\zeta^{\tilde p} & \mbox{if } \zeta\in F^{r}  \mbox{ or }  \zeta = \beta \\
   \sigma_{\psi(\zeta)}^{\tilde p}  & \mbox{if } \zeta\in F^{p} \sem F^{r} \mbox{ and } \zeta \neq \beta, \\  \end{cases}$
\item for $\gamma \in \supp (q)$ with $\gamma >0$, $q(\gamma) = p(\gamma) \cup r (\gamma)$.
\end{itemize}
For the induction, first let $\gamma = 1$. If $\zeta \in F^r$, then by $\tilde p \leq r$,
$\sigma_\zeta^r \sub \sigma_\zeta^{\tilde p} = \sigma_\zeta^q$. If, additionally, $\zeta \in F^p$,
then by $\tilde p \leq p''$, $\sigma_\zeta^p = \sigma_\zeta^{p''} \sub \sigma_\zeta^{\tilde p} = \sigma_\zeta^q$.
If $\zeta = \beta$, we similarly have $\sigma_\beta^p = \sigma_\beta^{p''} \sub \sigma_\beta^{\tilde p} = \sigma_\beta^q$.
Finally, if $\zeta \in F^p \sem F^r$ with $\zeta \neq \beta$, then, using again $\tilde p \leq p''$,
we see that $\sigma_\zeta^p = \sigma_{\psi (\zeta)}^{p'} =
\sigma_{\psi (\zeta)}^{p''} \sub \sigma_{\psi (\zeta)}^{\tilde p} = \sigma_\zeta^q$. 
Thus $q \re 1 \leq p \re 1, r  \re 1$, as required.

Note, in particular, that $\sigma_\alpha^q = \sigma_{\psi (\alpha)}^{\tilde p} = \sigma_{\alpha '}^{\tilde p}$.

As usual, the limit step of the induction is trivial. Let us assume we have
proved the statement for some $\gamma \geq 1$, and let us prove it for $\gamma + 1$.
First assume that $(\zeta,n) \in q(\gamma)$. Then we see that $q\re\gamma \forces \zeta \notin \dot A_\gamma$
as in the proofs of Lemmata~\ref{compat-crit} and~\ref{compat-lem}.

Hence assume $(\zeta,n), (\sigma,n) \in q (\gamma)$. As in the proof of Lemma~\ref{compat-crit}, we may assume that $\gamma \in
\supp (p) \cap \supp (r)$. In particular $\gamma \in X_0 = \Gamma$. First suppose $(\zeta,n) \in r (\gamma)$ and
$(\sigma,n) \in p(\gamma)$. Then $\zeta \in F^r \sub F^{\tilde p}$. Since $\tilde p \leq p,r$, we see that $(\zeta,n),
(\sigma, n) \in \tilde p (\gamma)$,
and since $\tilde p$ is a condition, $\sigma$ and $\sigma_\zeta^{\tilde p} = \sigma_\zeta^q$ are incompatible,
as required. 

Next suppose $(\zeta,n) \in p(\gamma)$ and $(\sigma,n) \in r(\gamma)$. So $\zeta \in F^p$. We split into cases according to
whether $\zeta = \beta$ or not. First assume $\zeta \neq \beta$. Then, using $\tilde p \leq p'' \leq p'$,
we see $(\psi(\zeta), n) \in p' (\gamma) \sub
p'' (\gamma) \sub \tilde p (\gamma)$ and $(\sigma,n) \in \tilde p (\gamma)$. Since $\tilde p$ is a condition,
$\sigma_{\psi(\zeta)}^{\tilde p}$ and $\sigma$ are incompatible. If $\zeta \in F^r$, then $\zeta\in Y_0$, and $\psi (\zeta) = \zeta$ follows.
Hence $\sigma_\zeta^q = \sigma_\zeta^{\tilde p} = \sigma_{\psi (\zeta)}^{\tilde p}$.
If $\zeta \notin F^r$, then $\sigma_\zeta^q = \sigma_{\psi (\zeta)}^{\tilde p}$ by definition.
In either case, we see that $\sigma_\zeta^q$ and $\sigma$ are incompatible.

Now assume $\zeta = \beta$. Since $\tilde p \leq p,r$, we have $(\beta,n) , (\sigma,n) \in \tilde p (\gamma)$,
and $\sigma_\beta^q = \sigma_\beta^{\tilde p}$ and $\sigma$ are incompatible because $\tilde p$ is a
condition.

This shows that $q \re \gamma + 1$ is a condition. Clearly $q \re\gamma + 1 \leq p \re \gamma + 1, r \re \gamma + 1$.
This completes the induction.
\end{proof}

Thus $q\leq p$, $\sigma_0 \sub \sigma_\alpha^q$, $\tau_0 \sub \sigma_\beta^q$ and $q$ forces that $(\sigma_0,\tau_0)$
belongs to $\dot S_n$, and the proof of the main claim is complete.
\end{proof}

This completes the proof of the main lemma and of the theorem.
\end{proof}

\end{document}